\numberwithin{equation}{section}
\theoremstyle{plain}
\newtheorem{theorem}{Theorem}
\newtheorem{proposition}[theorem]{Proposition}
\newtheorem*{theorem*}{Theorem}
\newtheorem*{conjecture*}{Conjecture}
\theoremstyle{definition}
\newtheorem{remark}[theorem]{Remark}
\newtheorem{example}[theorem]{Example}
\newtheorem*{definition}{Definition}
\newcommand{\CC}{{\mathbb{C}}}
\newcommand{\HH}{{\bf H}}
\newcommand{\PP}{{\mathbb{P}}}
\newcommand{\QQ}{{\mathbb{Q}}}
\newcommand{\RR}{{\mathbb{R}}}
\newcommand{\ZZ}{{\mathbb{Z}}}
\newcommand{\ff}{{\bf f}}
\newcommand{\hh}{{\bf h}}
\begin{document}
\title[Strange duality between complete intersection singularities]{Strange duality between the quadrangle complete intersection singularities}
%\date{\today}
\author{Wolfgang Ebeling and Atsushi Takahashi}
\address{Institut f\"ur Algebraische Geometrie, Leibniz Universit\"at Hannover, Postfach 6009, D-30060 Hannover, Germany}
\email{ebeling@math.uni-hannover.de}
\address{
Department of Mathematics, Graduate School of Science, Osaka University, 
Toyonaka Osaka, 560-0043, Japan}
\email{takahashi@math.sci.osaka-u.ac.jp}
\subjclass[2010]{32S20, 32S30, 14J33}
\date{}
\begin{abstract} 
There is a strange duality between the quadrangle isolated complete intersection singularities discovered by the first author and C.~T.~C.~Wall. We derive this duality from the mirror symmetry, the Berglund-H\"ubsch transposition of invertible polynomials, and our previous results about the strange duality between hypersurface and complete intersection singularities using matrix factorizations of size two.
\end{abstract}
\maketitle
%%%%%%%%%%%%%%%%%%%%%%%%%%%%%%%%%%%%%%%%%%%%%%%%%%%%%%%%%%%%%%%%
\section*{Introduction}

V.~I.~Arnold \cite{A} observed a strange duality  between the 14 exceptional unimodal singularities. It is well known that this  duality is a special case of the Berglund-H\"ubsch duality of invertible polynomials, see e.g. \cite{ET1}. C.~T.~C.~Wall and the first author \cite{EW} discovered an extension of this duality embracing on one hand series of bimodal hypersurface singularities and on the other hand, isolated complete intersection singularities (ICIS) in $\CC^4$. The duals of the ICIS are not themselves singularities but are virtual ($k=-1$) cases of series (e.g. $J_{3,k}$, $k \geq 0$) of bimodal singularities. In \cite{EW}, the $k=-1$ cases of the series were called {\em virtual singularities} and Milnor lattices were associated to them, but they do not coincide with the Milnor lattices of the germs at the origin by setting $k=-1$ in Arnold's equations of the series, which are exceptional unimodal singularities with a smaller Milnor number.
In \cite{ET3}, we showed that the virtual singularities exist in the sense that the equations have to be considered as global polynomials and we derived this extension from the mirror symmetry and the Berglund-H\"ubsch duality of invertible polynomials.

Arnold's 14 exceptional unimodal singularities are {\em triangle} hypersurface singularities, i.e., they are weighted homogeneous singularities obtained from triangles in the hyperbolic plane. More precisely, they are determined by triangles with angles $\frac{\pi}{b_1},\frac{\pi}{b_2},\frac{\pi}{b_3}$, where $b_1,b_2,b_3$ are positive integers called the {\em Dolgachev numbers} of the singularity.
The $k=0$ elements of the bimodal series are {\em quadrangle} hypersurface singularities, i.e., they are related in a similar way to quadrangles in the hyperbolic plane. They are determined by four positive integers  $b_1,b_2,b_3,b_4$. For 6 quadruples $(b_1,b_2,b_3,b_4)$, the corresponding quadrangle singularities are hypersurface singularities. The dual ICIS are  triangle complete intersection singularities in $\CC^4$. There are 8 of them determined by 8 triples $(b_1,b_2,b_3)$. For another 5 quadruples $(b_1,b_2,b_3,b_4)$, the quadrangle singularities are ICIS. These singularities will be considered in this paper. They are again the $k=0$ elements of certain series of singularities. These series are the 8 series of $\mathcal{K}$-unimodal ICIS in Wall's classification \cite{W}.
Wall and the first author  also observed a duality between the  $k=-1$ cases of these series (see also \cite[Sect.~3.6]{E1}). They were called {\em virtual singularites} as well. The objective of this paper is to show that these singularities exist as well and to derive this duality from the Berglund-H\"ubsch duality, too.   The quadrangle complete intersection singularities together with their Dolgachev numbers are listed in Table~\ref{TableICISk=0}. The virtual singularities obtained by setting $k=-1$ in the equations are listed in Table~\ref{TabICIS}.

We derive this duality from our paper \cite{ET3}. An important tool are matrix factorizations of size two. In \cite{ET4}, we showed that such a matrix factorization can be considered as an inverse to
Wall's reduction of complete intersection singularities to hypersurface singularities \cite{WPLMS}.  
We proceed as follows. In \cite{ET3}, we classified certain $4 \times 3$-matrices which provided the duality to complete intersection singularities. Here we consider the polynomials determined by these matrices for the bimodal series. We determine the matrix factorizations of size two of the corank 3 polynomials. It turns out that we get exactly 8 possibilities which correspond to the 8 series of ICIS. We show that one can associate $4 \times 4$-matrices to these equations
such that the duality is given by the Berglund-H\"ubsch transposition of these matrices. Using the definition of the virtual singularities in \cite{ET3} and matrix factorizations again, we define virtual complete intersection singularities. 

Similarly as in \cite{ET3}, we associate Dolgachev numbers to the virtual singularities. These are two pairs of numbers corresponding to a decomposition of the equations into two parts. We also associate Gabrielov numbers to the virtual singularities by considering deformations to cusp singularities. We consider the second function on the zero set of the first function. Again this function has to be considered as a global function. It turns out that these functions have, besides an isolated critical point at the origin, additional critical points outside the origin. We consider Coxeter-Dynkin diagrams of distinguished bases of thimbles corresponding to these functions taking the additional critical points into account. 

We show that  the Dolgachev numbers of a virtual singularity are the Gabrielov numbers of the dual one, and vice versa. Moreover, we show that an analogue of \cite[Theorem~6]{ET3} holds: the reduced zeta function of the monodromy of a virtual singularity coincides with the product of the Poincar\'e series of the $k=0$ element of the dual series and a polynomial related to its Dolgachev numbers. 

\begin{table}
\begin{tabular}{|c|c|c|c|c|}  \hline
Name   & Equations  & Restrictions & Dol  & Weights \\ 
\hline
$J'_{2,0}$ & $(xy+w^2, ax^5+xw^2+yw+z^2)$ & $a \neq 0,-\frac{4}{27}$ & 2,2,2,6 & $2,6,5,4;8,10$\\ 
$K'_{1,0}$ & $(xy+w^2, ax^4+xw^2+y^2+z^2)$ & $a \neq 0,\frac{1}{4}$ & 2,2,4,4 & $2,4,4,3;6,8$ \\ 
$L_{1,0}$ & $(xy+zw, ax^4+xw^2+yw+z^2)$ & $a \neq 0,-1$  & 2,2,3,5  & $2,5,4,3;7,8$ \\ 
$M_{1,0}$ & $(2xy+w^2-z^2, ax^2z+x^2w+2yw)$ & $a \neq 0, \pm 1$ & 2,3,3,4 & $2,4,3,3;6,7$ \\ 
$I_{1,0}$ & $(x(y-z)+w^3, aw^3+y(z-x))$ & $a \neq 0,1$ & 3,3,3,3  & $3,3,3,2;6,6$ \\ 
\hline
\end{tabular}
\caption{The elements with $k=0$ of the series}
\label{TableICISk=0}
\end{table}

\begin{table}
\begin{center}
\begin{tabular}{|c|c|}
\hline
Name  & Equation   \\
\hline
$J'_{2,-1}$ & $(xy+w^2,x^4+xw^2 + yw +z^2)$    \\
$K'_{1,-1}$ & $(xy+w^2,x^3+xw^2+y^2+z^2)$   \\
$K^\flat_{1,-1}$ & $(xy+w^2,x^4+x^2w+2xw^2+y^2+z^2)$   \\
$L_{1,-1}$ & $(xy+zw,x^3+xw^2+yw+z^2)$   \\
$L^\sharp_{1,-1}$ & $(xy+zw,-x^4+x^2w+xw^2+yw+z^2)$     \\
$M_{1,-1}$ & $(2xy+w^2-z^2,z^2+x^2w+2yw)$   \\
$M^\sharp_{1,-1}$ & $(2xy+w^2-z^2,z^2+x^2(w-z)+2yw)$     \\
$I_{1,-1}$ & $(x(y-z)+w^3, xw+y(z-x))$   \\
\hline
\end{tabular}
\end{center}
\caption{Setting $k=-1$ in Wall's equations} \label{TabICIS}
\end{table}

%%%%%%%%%%%%%%%%%%%%%%%%%%%%%%%
\section{Invertible polynomials} \label{sect:Z2}
We recall some general definitions.

A complete intersection singularity in $\CC^n$ given by polynomial equations $f_1= \cdots = f_k=0$ is called {\em weighted homogeneous} if there are positive integers $w_1, \ldots, w_n$ (called {\em weights}) and $d_1, \ldots, d_k$ (called {\em degrees}) such that $f_j(\lambda^{w_1}x_1, \ldots , \lambda^{w_n}x_n)=\lambda^{d_j} f_j(x_1, \ldots, x_n)$ for $j=1, \ldots, k$ and for $\lambda \in \CC^\ast$. We call $(w_1, \ldots, w_n; d_1, \ldots, d_k)$ a {\em system of weights}.

A weighted homogeneous polynomial $f(x_1, \ldots, x_n)$ is called {\em invertible} if it can be written
\[ f(x_1, \ldots , x_n)=\sum_{i=1}^n a_i \prod_{j=1}^n x_j^{E_{ij}},
\]
where $a_i \in \CC^\ast$, $E_{ij}$ are non-negative integers, and the $n \times n$-matrix $E:=(E_{ij})$ is invertible over $\QQ$.

An invertible polynomial is called {\em non-degenerate} if it has an isolated singularity at the origin.

Let $f$ be an invertible polynomial given as above. By rescaling of the variables, one can assume that $a_i=1$ for $i=1, \ldots, n$. Moreover, we can assume that $\det E >0$.

The {\em Berglund-H\"ubsch transpose} \cite{BH} $\widetilde{f}$ of $f$ is defined by the transpose matrix $E^T$ of $E$, i.e.\ 
\[ \widetilde{f}(x_1, \ldots , x_n)=\sum_{i=1}^n a_i \prod_{j=1}^n x_j^{E_{ij}}.
\]

Let $f(x_1, \ldots, x_n)$ be an invertible polynomial.
The {\em canonical system of weights} $W_f$ of $f$ is the system of weights 
$(w_1,\dots ,w_n;d)$ given by the unique solution of the equation
\begin{equation*}
E
\begin{pmatrix}
w_1\\
\vdots\\
w_n
\end{pmatrix}
={\rm det}(E)
\begin{pmatrix}
1\\
\vdots\\
1
\end{pmatrix}
,\quad 
d:={\rm det}(E).
\end{equation*} 
We define
\[ q_1:= \frac{w_1}{d} , \ldots , q_n:= \frac{w_n}{d}.
\]

The {\em maximal group of diagonal symmetries} of $f$ is the group
\[ G_f=\{(\lambda_1, \ldots, \lambda_n)\in (\CC^*)^n: 
f(\lambda_1 x_1, \ldots, \lambda_n x_n)= f(x_1, \ldots, x_n)\}. \] 
It always contains the exponential grading operator 
\[
g_0:=(e^{2 \pi i q_1}, \ldots , e^{2 \pi i q_n}).
\]
Denote by $G_0$ the subgroup of $G_f$ generated by $g_0$.

By \cite{BHe} (see also \cite[Proposition~2]{EGBLMS}), ${\rm Hom}(G_f, \CC^\ast)$ is isomorphic to $G_{\widetilde{f}}$. For a subgroup $G \subset G_f$, Berglund and Henningson \cite{BHe} defined its {\em dual group} $\widetilde{G}$ by
\[ \widetilde{G} := {\rm Hom}(G_f/G, \CC^\ast).
\]
Note that $|\widetilde{G}_0|=c_f$, see \cite[Proposition~3.1]{ET2}.

%%%%%%%%%%%%%%%%%%%%%%%%%%%%%%%%%%%%%%%%%%
\section{Wall's reduction}
Let $(X,0)$ be an ICIS in $\CC^4$ given by an equation $F=0$ where
\[ F(x,y,z,w)=(xy-a(z,w),yb(z,w)+c(x,z,w))
\]
where $a(z,w)$ and $c(x,z,w)$ are polynomials of degree $\geq  2$, $b(z,w)$ is a polynomial of degree $\geq 1$, and $x, b(z,w)$ form a regular sequence in $\CC[x,z,w]$.
Then we can consider the reduction 
\begin{equation*} \label{eq:matfac}
L_yF(x,z,w)=xc(x,z,w)+a(z,w)b(z,w)
\end{equation*}
of \cite{WPLMS} corresponding to the variable $y$. This means that we eliminate the variable $y$ to get the equation of a hypersurface singularity in $\CC^3$. Geometrically, this elimination corresponds to the projection along the $y$-axis on the coordinate space of the remaining variables $x,z,w$. It is proved in \cite[Theorem~7.9]{WPLMS}, for the case $b(z,w)=z$, that the Milnor number increases by one. 

In \cite{ET4}, we considered certain polynomials of the form
\[ 
f(x,z,w) = xc(x,z,w)+a(z,w)b(z,w)
\]
with the conditions on $a(z,w)$, $b(z,w)$, and $c(x,z,w)$ as above
and associated a complete intersection singularity to a graded matrix factorization  of size two of $f$. We showed that, in this way, we get an inverse to Wall's reduction. More precisely, a matrix factorization of $f$ is given by two matrices
\[ q_0 = \begin{pmatrix} a(z,w) & -x\\  c(x,z,w) & b(z,w) \end{pmatrix} \mbox{ and }  q_1= \begin{pmatrix} b(z,w) & x \\  -c(x,z,w) & a(z,w) \end{pmatrix}
\]
such that
\[ q_0q_1=q_1q_0= \begin{pmatrix} f & 0\\  0& f \end{pmatrix}.
\]
We associate to this the complete intersection singularity $(X_Q,0)$ given by 
\[ \mathbf{F}_Q(x,y,z,w)  =  (F_{Q,1}(x,y,z,w),F_{Q,2}(x,y,z,w)) := (a(z,w)-xy, c(x,z,w)+yb(z,w)).
\]
If $f=L_yF$, then we obtain back $(X_Q,0)=(X,0)$.

%%%%%%%%%%%%
According to \cite{ET4}, the quadrangle ICIS correspond to matrix factorizations of the quadrangle hypersurface singularities  $Q_{2,0}$, $S_{1,0}$, $S^\sharp_{1,0}$, and $U_{1,0}$. They are given by non-degenerate invertible polynomials with $[G_f:G_0]=2$. In \cite[Proposition~1]{ET3}, we classified such polynomials. The coordinates are chosen so that the action of $\widetilde{G}_0= \ZZ/2\ZZ$ on $\widetilde{f}$ is given by $(x,y,z) \mapsto (-x,-y,z)$. In \cite[Proposition~2]{ET3}, we classified certain $4 \times 3$ matrices corresponding to such polynomials. Using \cite[Table~9 \& 10]{ET3}, this amounts to the list of Table~\ref{TabBimod}. The last column indicates the coefficients $a_1,a_2,a_3,a_4 \in \CC$ which are used in \cite{ET3} and will be used in Section~\ref{sect:virt}.
\begin{table}[h]
\begin{center}
\begin{tabular}{|c|c|c|c|c|}
\hline
Name & Type & $p_1,p_2(q_2),p_3(q_3)$ & $\ff$ & $a_1,a_2,a_3,a_4$  \\
\hline
$Q_{2,0}$ & ${\rm IV}_1$ & 3,12,24 & $a_1x^3+a_2xy^4+a_3yz^2+a_4x^2y^2$ & $1,1,1,-2$\\
$S_{1,0}$ & ${\rm IV}_2$ & 5,10,20 & $a_1x^5+a_2xy^2+a_3yz^2+a_4x^3y$ & $1,1,1,-2$\\
$S^\sharp_{1,0}$ & ${\rm IV}^\sharp_2$ & 5,10,20 & $a_1x^2z^2+a_2xy^2+a_3yz^2+a_4x^3y$ & $-1,1,1,-1$ \\
$U_{1,0}$ & ${\rm IV}^\sharp_2$ & 3,6,18 & $a_1xz^3+a_2xy^2+a_3yz^3+a_4x^2y$ & $-1,1,1,-1$ \\
\hline
\end{tabular}
\end{center}
\caption{Functions $\ff$ of 4 of the quadrangle hypersurface singularities} \label{TabBimod}
\end{table}

We now consider the matrix factorizations of the functions $\ff$ of Table~\ref{TabBimod}. They are given  in Table~\ref{TabEqu}, where we use suitable coordinates $(x,z,w)$ instead of $(x,y,z)$. 
\begin{table}[h]
\begin{center}
\begin{tabular}{|c|c|c|c|}
\hline
Name & Coord. change & $\ff$ & Name  \\
\hline
$Q_{2,0}$ & $(x,y,z) \mapsto (w,x,z)$  & $x(a_2wx^3+a_3z^2+a_4xw^2) + (a_1w) \cdot w^2$ &$J'_{2,0}$  \\
$S_{1,0}$ & $(x,y,z) \mapsto (x,z,w)$  &  $x(a_1x^4+a_2z^2+a_4x^2z) +(a_3z) \cdot w^2$  & $K'_{1,0}$  \\
$S_{1,0}$ & $(x,y,z) \mapsto (x,z,w)$  & $x(a_1x^4+a_2z^2+a_4x^2z) +(a_3w) \cdot zw$ & $L_{1,0}$  \\
$S^\sharp_{1,0}$ & $(x,y,z) \mapsto (x,z,w)$  & $x(a_1xw^2+a_2z^2+a_4x^2z) +(a_3z) \cdot w^2$  & $K^\flat_{1,0}$ \\
$S^\sharp_{1,0}$  & $(x,y,z) \mapsto (x,z,w)$ & $x(a_1xw^2+a_2z^2+a_4x^2z)  +(a_3w) \cdot zw$  & $L^\sharp_{1,0}$   \\
$U_{1,0}$  & $(x,y,z) \mapsto (w,x,z)$ &$x(a_2xw+a_3z^3+a_4w^2)+(a_1z) \cdot z^2w$  & $M_{1,0}$ \\
$U_{1,0}$  & $(x,y,z) \mapsto (z,w,x)$ & $x(a_1x^2z+a_3x^2w) + (a_2w+a_4z) \cdot zw$  & $M^\sharp_{1,0}$  \\
$U_{1,0}$ & $(x,y,z) \mapsto (x,z,w)$ & $x(a_1w^3+a_2z^2+a_4xz)+ (a_3z) \cdot w^3$  & $I_{1,0}$  \\ 
\hline
\end{tabular}
\end{center}
\caption{Matrix factorizations of the functions $\ff$} \label{TabEqu}
\end{table}
In the case $Q_{2,0}$, the matrix factorization from \cite[Table~2]{ET4}
\[ q_1= \begin{pmatrix} y^4 -(1+\lambda_4)xy^2+\lambda_4 x^2 & -y \\  z^2 & -x \end{pmatrix}
\]
is equivalent to the matrix factorization from Table~\ref{TabEqu} with $a_1=\lambda_4$, $a_2=-1$, $a_3=1$, $a_4=1+\lambda_4$, which is seen by adding the second column multiplied by $y^3-(1+\lambda_4)xy$ to the first column.

%%%%%%%%%%%%%%%%%%%%%%%%%%%%%%%
\section{An extension of the Berglund-H\"ubsch duality}
We shall now show that the duality between the quadrangle complete intersection singularities can be derived from the Berglund-H\"ubsch transposition of invertible polynomials in 4 variables. We use the procedure in \cite{ET3} to associate a weighted homogeneous non-invertible polynomial with 4 terms in 4 variables to each of the quadrangle complete intersection singularities. We consider the complete intersection singularities associated to the matrix factorizations in Table~\ref{TabEqu} defined by equations $(F_{Q,1},F_{Q,2}$, where we set $a_i=1$, $i=1, \ldots , 4$, and where we take a suitable order of the terms. Moreover, in the equation for $I_{1,0}$ we replace $XZ+YZ$ by $X^2+Y^2$. We also substitute temporarily the variables $x,y,z,w$ by capital letters $X,Y,Z,W$. We have the following 4 cases:
\begin{itemize}
\item[(a)] $F_{Q,1}(X,Y,Z,W)=XY-W^2$,
\item[(b)] $F_{Q,1}(X,Y,Z,W)=XY-W^3$,
%F_{Q,1}
\item[(c)] $F_{Q,1}(X,Y,Z,W)=XY-ZW$,
\item[(d)] $F_{Q,1}(X,Y,Z,W)=XY-Z^2W$.
\end{itemize}
We make the following coordinate substitutions in $F_{Q,2}(X,Y,Z,W)$:
\begin{eqnarray*}
{\rm (a)} &  XY-W^2: & X:=x^2w,  Y:=y^2w,  Z:=z, W:=xyw, \label{sub:a}  \\
{\rm (b)} & XY-W^3: & X:=x^6w^3, Y:=y^6w^3, Z:=z, W:=x^2y^2w^2. \label{sub:b} \\
{\rm (c)} & XY-ZW: & X:=xw, Y:=yz, Z:=xz, W:=yw, \label{sub:c} \\
{\rm (d)} & XY-Z^2W: & X:=y^2z^2, Y:=x^2w^2, Z:=xz, W:=y^2w^2, \label{sub:d}
\end{eqnarray*}
Then the polynomial $F_{Q,2}(X,Y,Z,W)$ is transformed to an invertible polynomial
\[
f(x,y,z,w)= \sum_{i=1}^4 x^{E_{i1}}y^{E_{i2}}z^{E_{i3}}w^{E_{i4}}
\]
for a $4 \times 4$-matrix $E$ of exponents. The corresponding polynomials are listed in Table~\ref{TabDual}. 

This procedure can be explained as follows.
We observe that the kernel of the matrix $E$ is generated by one of the following vectors:
\begin{itemize}
\item[(a),(b)] $(1,1,0,-2)^T$,
\item[(c),(d)] $(1,1,-1,-1)^T$,
\end{itemize}
Let $R:=\CC[x,y,z,w]$. There exists a $\ZZ$-graded structure on $R$ given by the respective $\CC^\ast$-action (here $\lambda \in \CC^\ast$):
\begin{eqnarray*}
{\rm (a), (b)} & & \lambda \ast (x,y,z,w) = (\lambda x, \lambda y, z , \lambda^{-2} w)  \\
{\rm (c), (d)} &  &  \lambda \ast (x,y,z,w) = (\lambda x, \lambda y, \lambda^{-1}z , \lambda^{-1} w) \\
\end{eqnarray*}
Let $R= \bigoplus_{i \in \ZZ} R_i$ be the decomposition of $R$ according to one of these $\ZZ$-gradings. The new coordinates $X,Y,Z,W$ are some invariant polynomials with respect to these actions and they satisfy the relation given by the corresponding first equation.

\begin{table}[h]
\begin{center}
\begin{tabular}{|c|c|c|c|}
\hline
Name & $(F_{Q,1},F_{Q,2})$  & $f$ & Dual \\
\hline
$J'_{2,0}$  & $(XY-W^2,X^3W+YW+Z^2+XW^2)$  & $x^7yw^4+xy^3w^2+z^2+x^4y^2w^3$& $J'_{2,0}$ \\
$K'_{1,0}$  & $(XY-W^2,X^4+Z^2+YZ+X^2Z)$  & $x^8w^4+z^2+y^2zw+x^4zw^2$ & $K'_{1,0}$\\
$K^\flat_{1,0}$ & $(XY-W^2,XW^2+Z^2+YZ+X^2Z)$ & $x^4y^2w^3+z^2+y^2zw+x^4zw^2$ & $L_{1,0}$ \\
$L_{1,0}$ & $(XY-ZW,X^4+Z^2+YW+X^2Z)$  & $x^4w^4+x^2z^2+y^2zw+x^3zw^2$ & $K^\flat_{1,0}$\\
$L^\sharp_{1,0}$ & $(XY-ZW,XW^2+Z^2+YW+X^2Z)$  & $xy^2w^3+x^2z^2+y^2zw+x^3zw^2$ & $L^\sharp_{1,0}$\\
$M_{1,0} $ & $(XY-Z^2W,Z^3+W^2+YZ+XW)$ & $x^3z^3+y^4w^4+x^3zw^2+y^4z^2w^2$ & $M_{1,0}$\\
$M^\sharp_{1,0}$ & $(XY-ZW,X^2W+YZ+YW+X^2Z)$ & $x^2yw^3+xyz^2+y^2zw+x^3zw^2$ & $M^\sharp_{1,0}$\\
$I_{1,0}$ & $(XY-W^3,X^2+Y^2+Z^2+W^3)$  & $x^{12}w^6+y^{12}w^6+z^2+x^6y^6w^6$  & $I_{1,0}$\\ 
\hline
\end{tabular}
\end{center}
\caption{Strange duality} \label{TabDual}
\end{table}

An inspection of Table~\ref{TabDual} shows that the Berglund-H\"ubsch transpose of the polynomial $f$ is either the polynomial $f$ itself or another polynomial appearing in the table. This leads to the indicated duality.

%%%%%%%%%%%%%%%%%%%%%%%%%%%%%
\section{Virtual isolated complete intersection singularities} \label{sect:virt}
We now derive the equations for the virtual singularities.  

In \cite[Section~4]{ET3}, we associated a polynomial $\hh$ to $\ff$, which defines the corresponding virtual bimodal hypersurface singularity. 
This is done as follows. We consider the polynomial $\ff(x,z,w)$ from Table~\ref{TabBimod} with the choice of coefficients  $a_1,a_2,a_3,a_4$ given in the last column. Then the corresponding equation defines a non-isolated singularity. We consider the cusp singularity
\[ \ff(x,z,w)-xzw
\]
and perform the coordinate change indicated in Table~\ref{Tab:ftoh}.
\begin{table}[h]
\begin{center}
\begin{tabular}{|c|c|c|c|}
\hline
Name & $\ff$ & Coord.~change & $\hh$ \\
\hline
$Q_{2,-1}$ & $w^3+x^4w+xz^2-2x^2w^2$ & $w \mapsto w+x^2$ & $w^3+x^2w^2+xz^2-x^3z$  \\
$S_{1,-1}$ & $x^5+xz^2+zw^2-2x^3z$ & $z \mapsto z+x^2$ & $xz^2+zw^2+x^2w^2-wx^3$ \\
$S^\sharp_{1,-1}$ & $-x^2w^2+xz^2+zw^2-x^3z$ & $z \mapsto z+x^2$ & $xz^2+x^3z+zw^2-x^3w$ \\
$U_{1,-1}$ & $-z^3w+x^2w+xz^3-xw^2$ & $x \mapsto x+w$ & $x^2w+xw^2+xz^3-zw^2$ \\
\hline
\end{tabular}
\end{center}
\caption{Functions $\hh$ of 4 of the quadrangle hypersurface singularities} \label{Tab:ftoh}
\end{table}
Then this polynomial is transformed to 
\[ \hh(x,z,w)-xzw,
\]
where the polynomial $\hh(x,z,w)$ is indicated in Table~\ref{Tab:ftoh}. The polynomial $\hh(x,z,w)$ has an isolated singularity at the origin, but also an additional critical point of type $A_1$ outside the origin. Moreover, if we consider the 1-parameter family $\hh(x,z,w)-t \cdot xzw$ for $t \in [0,1]$, then, for $t\neq 0,1$, the polynomial $\hh(x,z,w)-t \cdot xzw$ has two additional critical points of type $A_1$ outside the origin. One of them merges with the singularity of $\hh(x,z,w)$ at the origin for $t=0$ and the other one merges with the singularity of $\ff(x,z,w)-xzw$ at the origin for $t=1$.

\begin{example} Consider the case $Q_{2,0}$. Then
\begin{eqnarray*}
\lefteqn{\ff(x,z,w+x^2)-xz(w+x^2)} \\
 & = & (w+x^2)^3+x^4(w+x^2)+xz^2-2x^2(w+x^2)^2-xz(w+x^2)\\
 & = & w^3+x^2w^2+xz^2-x^3z-xzw \\
 &  = &  \hh(x,z,w)-xzw.
\end{eqnarray*}
The polynomial $\hh(x.z,w)$ has a singularity of Arnold type $Q_{12}$ at the origin.
On the other hand, for $t \neq 0$,
\begin{eqnarray*}
\lefteqn{\hh\left(x,z,w-\frac{1}{t}x^2\right)-t \cdot xz\left(w-\frac{1}{t}x^2\right)} \\
& = & \left(w-\frac{1}{t}x^2 \right)^3+x^2\left(w-\frac{1}{t}x^2 \right)^2+xz^2-x^3z    -t \cdot xz\left(w-\frac{1}{t}x^2 \right) \\
& = & w^3+ \left(3\frac{1}{t^2}-2\frac{1}{t}\right)x^4w + \left( \frac{1}{t^2}-\frac{1}{t^3} \right)x^6+xz^2+\left(1-3\frac{1}{t} \right)x^2w^2 - t \cdot xzw.
\end{eqnarray*}
Using the proof of \cite[Theorem~10]{ET1}, one can show that, for $t \neq 1$, this is a cusp singularity of type $T_{3,3,6}$. For $t=1$, it is a cusp singularity of type $T_{3,3,7}$. Using this, one can check the above statements.
\end{example}

Now we are looking at possible matrix factorizations of the polynomials $\hh$ of Table~\ref{Tab:ftoh}. They are listed together with the corresponding isolated complete intersection singularities  in Table~\ref{Tab0->Virt}. The corresponding isolated complete intersection singularity is denoted by $\HH=(\hh_1, \hh_2)$. The resulting singularities defined  by $\HH=(\hh_1, \hh_2)$ are called the {\em virtual singularities} and they are denoted by replacing the index 0 by $-1$.
\begin{table}[h]
\begin{center}
\begin{tabular}{|c|c|c|c|}
\hline
Name  & Matrix factorization of $\hh$  & $(\hh_1,\hh_2)$  & Virtual \\
\hline
$Q_{2,-1}$ & $x(-x^2z+z^2+xw^2)+w \cdot w^2$  & $(xy-w^2,-x^2z+yw+z^2+xw^2)$  & $J'_{2,-1}$\\
$S_{1,-1}$  & $x(-x^2w+z^2+xw^2)+z \cdot w^2$ & $(xy-w^2, -x^2w+z^2+yz+xw^2)$  & $K'_{1,-1}$ \\
$S^\sharp_{1,-1}$ & $x(-x^2w+z^2+x^2z)+z \cdot w^2$  & $(xy-w^2, -x^2w+z^2+yz+x^2z)$ & $K^\flat_{1,-1}$  \\
$S_{1,-1}$ & $x(-x^2w+z^2+xw^2)+zw \cdot w$ & $(xy-zw, -x^2w+z^2+yw+xw^2)$ & $L_{1,-1}$ \\
$S^\sharp_{1,-1}$ & $x(-x^2w+z^2+x^2z)+zw \cdot w$ & $(xy-zw, -x^2w+z^2+yw+x^2z)$ & $L^\sharp_{1,-1}$   \\
$U_{1,-1}$  & $x(xw+z^3+w^2)-z \cdot w^2$ & $(xy-w^2,-yz+xw+z^3+w^2)$ & $M_{1,-1}$ \\
$U_{1,-1}$ & $x(-z^2+x^2w)+zw \cdot (z+w)$ & $(xy-zw,-z^2+yw+x^2w+yz)$ & $M^\sharp_{1,-1}$ \\
$U_{1,-1}$  & $x(-xw+z^2+xz)+ z \cdot w^3$  & $(xy-w^3, -xw+z^2+yz+xz)$ & $I_{1,-1}$   \\ 
\hline
\end{tabular}
\end{center}
\caption{Virtual singularities} \label{Tab0->Virt}
\end{table}
There is another matrix factorization in the case $U_{1,-1}$, namely
\[
x(xw+z^3+w^2)-zw \cdot w .
\]
It is equivalent to the matrix factorization corresponding to $I_{1,-1}$.

Let 
\[ \HH(x,y,z,w) = (\hh_1(x,y,z,w), \hh_2(x,y,z,w)) 
\]
with
\[ \hh_2(x,y,z,w)=\sum_{i=1}^4 a_i x^{A_{i1}}y^{A_{i2}}z^{A_{i3}}w^{A_{i4}}
\]
 be the equations defining a virtual singularity and let ${\rm Supp}(\hh_2)=\{ (A_{i1}, A_{i2}, A_{i3}, A_{i4}) \in \ZZ^4 \, | \, i=1, \ldots ,4\}$.
Let $\Gamma_\infty(\hh_2)$ be the Newton polygon of $\hh_2$ at infinity \cite{Ko}, i.e.\ $\Gamma_\infty(\hh_2)$ is the convex closure in $\RR^4$ of ${\rm Supp}(\hh_2) \cup \{ 0 \}$. The Newton polygon $\Gamma_\infty(\hh_2)$ has two faces which do not contain the origin.  
Call these faces $\Sigma_1$ and $\Sigma_2$. Let $I_k:= \{i \in \{1, \ldots, 4\} \, | \, (A_{i1}, A_{i2}, A_{i3}, A_{i4}) \in \Sigma_k \}$, $k=1,2$, and let
\[
\hh_{2,k}= \sum_{i \in I_k} a_i x^{A_{i1}}y^{A_{i2}}z^{A_{i3}}w^{A_{i4}}.
\]
Then $(\hh_1,\hh_{2,k})$ defines a non-isolated weighted homogeneous complete intersection singularity.  The polynomials $\hh_1$ and $\hh_2$ and their systems of weights are listed in Table~\ref{TabVirt}.  
\begin{table}[h]
\begin{center}
\begin{tabular}{|c|c|c|c|c|}
\hline
Name   & $\hh_{2,1}$  & Weights & $\hh_{2,2}$ & Weights\\
\hline
$J'_{2,-1}$  & $-x^2z+z^2+xw^2$ & $2,4,4,3;6,8$  &$z^2+xw^2+yw$ & $2,6,5,4;8,10$ \\
$K'_{1,-1}$   & $-x^2w+z^2+xw^2$  & $2,2,3,2;4,6$ & $z^2+xw^2+yz$ & $2,4,4,3;6,8$ \\
$K^\flat_{1,-1}$   & $-x^2w+x^2z+yz$ & $2,4,3,3;6,7$ & $x^2z+yz+z^2$ & $2,4,4,3;6,8$\\
$L_{1,-1}$    &  $-x^2w+z^2+xw^2$  & $2,3,3,2;5,6$ & $z^2+xw^2+yw$  & $2,5,4,3;7,8$ \\
$L^\sharp_{1,-1}$   & $-x^2w+x^2z+yw$ & $2,4,3,3;6,7$ & $x^2z+yw+z^2$ & $2,5,4,3;7,8$ \\
$M_{1,-1}$   &  $xw+z^3+w^2$ & $3,3,2,3;6,6$ &  $z^3+w^2-yz$ & $2,4,2,3;6,6$ \\
$M^\sharp_{1,-1}$   & $-z^2+x^2w+yz$ & $2,3,3,2;5,6$ & $x^2w+yz+yw$ & $2,4,3,3;6,7$ \\
$I_{1,-1}$   & $-xw+yz+xz$ & $3,3,2,2;6,5$ & $yz+xz+z^2$ & $3,3,3,2;6,6$ \\ 
\hline
\end{tabular}
\end{center}
\caption{Decomposition of equations} \label{TabVirt}
\end{table}

%%%%%%%%%%%%%%%%%%%%%%%%%
\section{Dolgachev numbers}

We shall now define Dolgachev numbers for our virtual singularities.

The Dolgachev numbers of the virtual singularity $(\hh_1,\hh_2)$ are defined in a similar way as \cite[Section~5]{ET3}. Let $i=1,2$ and let
$V_i:= \{ (x,y,z,w) \in \CC^4 \, | \, \hh_1(x,y,z,w)=0, \ \hh_{2,i}(x,y,z,w)=0\}$. 
We consider the $\CC^\ast$-action on $V_i$ given by the system of weights of $(\hh_1,\hh_{2,i})$ (see Table~\ref{TabVirt}). We consider exceptional orbits (i.e.\ orbits with a non-trivial isotropy group) of this action. We distinguish between three cases:
\begin{itemize}
\item[(A)] $V_i$ contains a linear subspace $L$ of $\CC^4$ of codimension 2 obtained by setting two coordinates to be zero.
\item[(B)] $V_i=U \cup U'$, where 
\[
(\hh_1(x,y,z,w),\hh_{2,i}(x,y,z,w))=(g_1(x,y,w), zg_2(x,y,z)),
\]
\[
U=\{ (x,y,z,w) \in \CC^4 \, | \, g_1(x,y,w)=z=0 \}, 
\]
\[
U'= \{ (x,y,z,w) \in \CC^4 \, | \, g_1(x,y,w)=g_2(x,y,z)=0 \}.
\]
\item[(C)] $V_i$ is not of the form of (A) or (B).
\end{itemize}
In case (A) we consider those exceptional orbits which are not contained in $L$. In case (B) we consider those exceptional orbits which are not contained in $U$. In case (C) we consider those exceptional orbits which do not coincide with the singular locus of $V_i$. We call these the {\em principal} orbits. It turns out that in all cases we have exactly two principal orbits. 

\begin{definition} The {\em Dolgachev numbers} of the virtual singularity $(\hh_1,\hh_2)$ are the numbers $\alpha_1, \alpha_2; \alpha_3, \alpha_4$ where $\alpha_1, \alpha_2$ and $\alpha_3, \alpha_4$ are the orders of the isotropy groups of the principal exceptional orbits of $(\hh_1,\hh_{2,1})$ and $(\hh_1,\hh_{2,2})$ respectively.
\end{definition}

The Dolgachev  numbers of the virtual singularities are computed as follows. The two pairs of polynomials $(\hh_1, \hh_{2,i})$, $i=1,2$, of Table~\ref{TabVirt} define non-isolated weighted homogeneous complete intersection singularities of certain types. The systems of weights correspond to the five quadrangle ICIS and three elliptic complete intersection singularities as considered by Wall \cite{W2}. We indicate the notation of Wall \cite{W2}  in Table~\ref{TabDol}. The corresponding orbifold curves have genus zero. We list the orders of the isotropy groups of the exceptional orbits of these ICIS in this table (see also \cite{ERIMS}). Some of them correspond to the orders of the isotropy groups of the exceptional orbits for the non-isolated singularities given by the pairs $(\hh_1, \hh_{2,i})$, $i=1,2$. Those ones which do not occur are stroken out. The orders of the isotropy groups of the principal orbits are indicated in bold face. We also indicate for each pair which of the corresponding cases (A), (B), or (C) applies. An exceptional orbit which coincides with the singular locus is marked by $^*$. The Dolgachev numbers $\alpha_1, \alpha_2; \alpha_3, \alpha_4$ of the virtual singularities are indicated in the last column.
\begin{table}[h]
\begin{center}
\begin{tabular}{|c|c|cc|c|cc|c|}
\hline
 Name & $(\hh_1,\hh_{2,1})$  & & orbits  &  $(\hh_1,\hh_{2,2})$ & & orbits & $\alpha_1,\alpha_2;\alpha_3, \alpha_4$ \\
\hline
$J'_{2,-1}$   & $K'_{1,0}$  & (C) &  {\bf 2,2},$4^*,\not 4$  & $J'_{2,0}$  & (C) & $\not 2,2^*$,{\bf 2,6} & 2,2;2,6 \\
$K'_{1,-1}$  & $\delta 1$ & (C)  & {\bf 2,2},$2^*,\not 2, \not 2$  &  $K'_{1,0}$ & (C) & $\not 2,2^*$,{\bf 4,4} & 2,2;4,4\\
$K^\flat_{1,-1}$  & $M_{1,0}$  & (C) & {\bf 2},$\not 3,3^*$,{\bf 4} & $K'_{1,0}$ & (B)   & {\bf 2},2,4,{\bf 4} & 2,4;2,4\\
$L_{1,-1}$  &  $\alpha 1^{(2)}$ & (A) & {\bf 2,2},2,$3^*$,$\not 3$ & $L_{1,0}$  & (C) & $\not 2,2^*$,{\bf 3,5} & 2,2;3,5\\
$L^\sharp_{1,-1}$  & $M_{1,0}$  & (A) & {\bf 2,3},$3^*$,4 & $L_{1,0}$ & (A) & {\bf 2},2,3,{\bf 5} & 2,3;2,5\\
$M_{1,-1}$   & $I_{1,0}$  & (C) & {\bf 3,3},$3^*,\not 3$ &  $\alpha 1^{(1)}$ & (C) & $\not 2, \not 2,2^*$,{\bf 2,4} & 3,3;2,4\\
$M^\sharp_{1,-1}$  & $\alpha 1^{(2)}$  & (A) & {\bf 2},$\not 2,2^*$,3,{\bf 3} &  $M_{1,0}$ & (A) & 2,3,${\bf 3}^*$,{\bf 4} & 2,3;3,4\\
$I_{1,-1}$ & $\alpha 1^{(2)}$ & (C) & $\not 2,\not 2,2^*$,{\bf 3,3} &  $I_{1,0}$ & (B) & 3,3,{\bf 3,3} & 3,3;3,3 \\
\hline
\end{tabular}
\end{center}
\caption{Dolgachev numbers} \label{TabDol}
\end{table}

\begin{example} 

(a) We consider the singularity $K^\flat_{1,-1}$. We have $(\hh_1, \hh_{2,1})=(xy-w^2, -x^2w+x^2z+yz)$ with the system of weights $(2,4,3,3;6,7)$. We are in case (C). The exceptional orbits are 
\begin{center}
\begin{tabular}{cl}
$x=y=w=0$ & singular line, order of isotropy group: 3\\
$y=z=w=0$ & order of isotropy group: 2\\
$x=z=w=0$ & order of isotropy group: 4
\end{tabular}
\end{center}
This gives $(\alpha_1,\alpha_2)=(2,4)$.

(b)  We again consider the singularity $K^\flat_{1,-1}$, but now $(\hh_1, \hh_{2,2})=(xy-w^2, x^2z+yz+z^2)$ with the system of weights $(2,4,4,3;6,8)$. Here we are in case (B). The singular locus is the curve $z=x^2+y=x^3+w^2=0$ with trivial isotropy group. It is contained in $U=\{ xy-w^2=z=0 \}$. The exceptional orbits contained in $U$ are
\begin{center}
\begin{tabular}{cl}
$y=z=w=0$ & order of isotropy group: 2\\
$x=z=w=0$ & order of isotropy group: 4
\end{tabular}
\end{center}
The exceptional orbits not contained in $U$ are
\begin{center}
\begin{tabular}{cl}
$y=w=x^2+z=0$ & order of isotropy group: 2\\
$x=w=y+z=0$ & order of isotropy group: 4
\end{tabular}
\end{center}
This gives $(\alpha_3,\alpha_4)=(2,4)$.

(c) We consider the case $L_{1,-1}$ with $(\hh_1,\hh_{2,1})=(xy-zw,-x^2w+xw^2+z^2)$. The system of weights is $(2,3,3,2;5,6)$. Here $V_1$ contains the hyperplane $L=\{x=z=0\}$, so we are in case (A). The exceptional orbits contained in $L$ are
\begin{center}
\begin{tabular}{cl}
$x=z=w=0$ & singular line, order of isotropy group: 3\\
$x=y=z=0$ & order of isotropy group: 2
\end{tabular}
\end{center}
The exceptional orbits not contained in $L$ are
\begin{center}
\begin{tabular}{cl}
$y=z=w=0$ & order of isotropy group: 2\\
$x-w=y=z=0$ & order of isotropy group: 2
\end{tabular}
\end{center}
This gives $(\alpha_1,\alpha_2)=(2,2)$.
\end{example}

\begin{remark} Using the primary decomposition algorithm of the computer algebra software {\sc Singular} \cite{DGPS}, one can show that, for each pair $(\hh_1,\hh_{2,i})$ where we have case (A), the subspace $L$ is an irreducible component of $V_i$. If one removes this component $L$ in case (A), the component $U$ in case (B), and the point corresponding to the singular line in case (C), one gets $\PP^1_{\alpha_{2i-1},\alpha_{2i}}$ with one point removed. Here $\PP^1_{\alpha_{2i-1},\alpha_{2i}}$ denotes the complex projective line with two orbifold points with singularities $\ZZ/\alpha_j\ZZ$, $j=2i-1,2i$.
\end{remark}
%%%%%%%%%%%%%%%%%%%%%
\section{Gabrielov numbers}

We now want to define Gabrielov numbers. They will be defined as in \cite[Section~5]{ET3}. For this purpose, we consider the pairs $\HH=(\hh_1,\hh_2)$ of polynomials of Table~\ref{Tab0->Virt}. Here the first three cases  $J'_{2,-1}$, $K'_{1,-1}$, and $K^\flat_{1,-1}$ are suspensions of the curve singularities  $J_{2,-1}$, $K_{1,-1}$, and $K^\sharp_{1,-1}$. In these cases, we consider as the first polynomial $\hh_1(x,y,z,w):=xy-w^2-z^2$. Then we consider the complete intersection singularity $(X',0)$ defined by
\[ \left\{ \begin{array}{l} \hh_1(x,y,z,w), \\ \hh_2(x,y,z,w)-zw. \end{array} \right.
\]
As in \cite{ET1}, one can show that the singularity $(X',0)$ is $\mathcal{K}$-equivalent to the singularity defined by
\[ \left\{ \begin{array}{l} xy-z^{\gamma_1}-w^{\gamma_2}, \\ x^{\gamma_3}+y^{\gamma_4}-zw. \end{array} \right.
\]
This means that $(X',0)$ is a cusp singularity of type $T^2_{\gamma_1,\gamma_3,\gamma_2,\gamma_4}$ in the notation of \cite[3.1]{E1}.

\begin{definition} The {\em Gabrielov numbers} of the virtual singularity given by the pair $(\hh_1,\hh_2)$ are the numbers $\gamma_1,\gamma_2;\gamma_3,\gamma_4$.
\end{definition}

\begin{proposition} The Gabrielov numbers of the virtual quadrangle complete intersection singularities are given by Table~\ref{TabBiGab}.
\end{proposition}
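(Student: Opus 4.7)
The plan is to verify the proposition case by case for each of the eight virtual singularities in Table~\ref{Tab0->Virt}, following the strategy already used successfully in \cite{ET1, ET3}. For each pair $(\hh_1, \hh_2)$ I would form the perturbation $(\hh_1, \hh_2 - zw)$ --- with $\hh_1$ replaced by $xy - w^2 - z^2$ in the first three cases, as stipulated in the definition --- and then use $\mathcal{K}$-equivalence, which allows simultaneous invertible coordinate changes together with $\mathrm{GL}_2(\calO_{\CC^4,0})$-transformations of the generating pair, to bring the system into the canonical cusp form
\begin{equation*}
\bigl(\, xy - z^{\gamma_1} - w^{\gamma_2},\; x^{\gamma_3} + y^{\gamma_4} - zw \,\bigr).
\end{equation*}
Reading off the four exponents then yields the entries of Table~\ref{TabBiGab}, exhibiting $(X',0)$ as a cusp of type $T^2_{\gamma_1,\gamma_3,\gamma_2,\gamma_4}$ as required.

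The main computational step in each case is to use the first equation to eliminate unwanted monomials from the second. Since $\hh_1$ always has the shape $xy$ minus a polynomial in $z,w$ (resp.\ $xy - w^2 - z^2$ in the suspension cases), one can substitute for $xy$ modulo $\hh_1$, or equivalently subtract appropriate $\calO$-multiples of $\hh_1$ from $\hh_2 - zw$, until the latter has no terms containing both $x$ and $y$ other than the $-zw$ already isolated. I would then perform a change of variables of the form $x \mapsto x + p(z,w)$, $y \mapsto y + q(z,w)$, possibly followed by rescalings of $z$ and $w$ by units, chosen so that the remaining pure terms in $\hh_2-zw$ reduce to the diagonal $x^{\gamma_3} + y^{\gamma_4}$ while the relation $\hh_1$ simultaneously reduces to $xy - z^{\gamma_1} - w^{\gamma_2}$. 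For the cases where $\hh_1 = xy - w^3$ or $xy - z^2 w$ the same mechanism applies, except that the exponents $(\gamma_1,\gamma_2)$ will now reflect the shape of the right-hand side of $\hh_1$ itself rather than of $\hh_2$.

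The main obstacle I anticipate is the bookkeeping in those cases ($L^\sharp_{1,-1}$, $M^\sharp_{1,-1}$, $K^\flat_{1,-1}$) where $(\hh_1,\hh_2)$ does not split cleanly into one part in the variables $(x,y)$ and one in $(z,w)$: here the coordinate change needed to produce the canonical cusp form has to be found by trial and must be checked to be invertible at the origin, and one has to verify that the resulting pair is really $\mathcal{K}$-equivalent to a fourfold cusp of type $T^2_{\gamma_1,\gamma_3,\gamma_2,\gamma_4}$ and not to some other degenerate complete intersection. A useful consistency check throughout is the strange duality promised by the preceding sections: the Gabrielov numbers of each virtual singularity should match the Dolgachev numbers of its Berglund-H\"ubsch dual listed in Table~\ref{TabDol}, and any slip in the calculation would immediately violate this pattern.
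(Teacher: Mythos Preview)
Your plan is sound and would work, but it differs from the paper's route in one key respect. For all cases except $I_{1,-1}$, the paper does \emph{not} carry out the $\mathcal{K}$-equivalence directly at the level of the ICIS pair. Instead it applies Wall's reduction $L_y$ to the perturbed pair $(\hh_1,\hh_2-zw)$, obtaining the hypersurface cusp $\hh(x,z,w)-xzw$; the Gabrielov numbers of the corresponding virtual \emph{hypersurface} singularity were already computed in \cite{ET3}, and the paper simply records the relation $(\gamma_{\sigma(1)},\gamma_{\sigma(2)},\gamma_{\sigma(3)},\gamma_{\sigma(4)})=(2,\widetilde{\gamma}_1-1,\widetilde{\gamma}_2-1,\widetilde{\gamma}_3-1)$ between the ICIS and hypersurface Gabrielov numbers. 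Only for $I_{1,-1}$ does the paper perform the explicit $\mathcal{K}$-equivalence calculation you describe, via a chain of substitutions $w\mapsto w+y$, $w\mapsto w+x$, $x\mapsto x+z$, etc., using \cite[Lemma~7.3]{a:1} to remove auxiliary polynomials at each step.

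What each approach buys: the paper's reduction is far more economical, since seven of the eight cases are disposed of by invoking existing hypersurface computations rather than redoing them one level up; your direct approach is self-contained but would require eight separate and in places intricate $\mathcal{K}$-equivalence verifications. Your outline of the mechanism (subtracting multiples of $\hh_1$, then shifting $x$ and $y$ by polynomials in $z,w$) is correct and is essentially what the paper does for $I_{1,-1}$. One minor slip: among the virtual pairs in Table~\ref{Tab0->Virt} there is no case with $\hh_1=xy-z^2w$; the possibilities are $xy-w^2$, $xy-zw$, and $xy-w^3$.
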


\begin{proof} We consider Wall's reduction of the virtual quadrangle singularities according to Table~\ref{Tab0->Virt}. The cusp singularity $\HH':=(\hh_1(x,y,z,w), \hh_2(x,y,z,w)-zw)$ corresponds to the hypersurface cusp singularity $L_y\HH'=\hh(x,z,w) -xzw$.  In all cases except $I_{1,-1}$, by transformations similar to the transformations in \cite{ET3}, we obtain the indicated Gabrielov numbers. More precisely, for a suitable permutation $\sigma: \{1,2,3,4\} \to \{1,2,3,4\}$, the Gabrielov numbers satisfy $(\gamma_{\sigma(1)}, \gamma_{\sigma(2)},\gamma_{\sigma(3)},\gamma_{\sigma(4)})=(2, \widetilde{\gamma}_1-1,\widetilde{\gamma}_2-1,\widetilde{\gamma}_3-1)$, where $(\widetilde{\gamma}_1,\widetilde{\gamma}_2,\widetilde{\gamma}_3)$ are the Gabrielov numbers of the corresponding virtual hypersurface singularity.

In the case $I_{1,-1}$, we indicate the claimed $\mathcal{K}$-equivalence. We add the first polynomial $\hh_1(x,y,z,w)$ to the second one $\hh_2(x;y,z,w)-zw$ and obtain
\begin{equation} (xy-w^3, xy-w^3-xw+z^2+yz+xz-zw). \label{eq:I1}
\end{equation}
By the transformation $w \mapsto w+y$, this is transformed to 
\begin{equation}
(xy-w^3-p_1(y,w),-y^3-w^3-q_1(y,w)-xw+z^2+xz-zw), \label{eq:I2}
\end{equation}
where $p_1(y,w)$ and $q_1(y,w)$ are certain polynomials of degree 3 in the variables $y$ and $w$. Using \cite[Lemma~7.3]{a:1} and the fact that $y$ divides $p_1(y,w)$, one can get rid of the polynomial $p_1(y,w)$ with the help of the term $xy$. Similarly, one can get rid of the polynomial $q_1(y,w)$ in the second equation with the help of the term $zw$. Now we apply the transformation $w \mapsto w+x$. Then the pair (\ref{eq:I2}) gets
\begin{equation}
(xy-w^3-x^3-p_2(x,w), -x^3-y^3 -q_2(x,w)-xw-x^2+z^2-zw), \label{eq:I3}
\end{equation}
where $p_2(x,w)$ and $q_2(x,w)$ are again polynomials which can be removed. Applying the transformation $x \mapsto x+z$, one gets
\begin{equation}
(xy-w^3-z^3 -p_3(x,z), -x^3-y^3-q_3(x,z)-xw-x^2-2xz-2zw), \label{eq:I4}
\end{equation}
again with certain removable polynomials $p_3(x,z)$ and $q_3(x,z)$.
Finally, by the transformation $z \mapsto \frac{1}{2}(z-x)$ followed by $w \mapsto w-x$ and rescaling, we obtain
\begin{equation}
(xy-z^3-w^3+p_4(x,w), x^3+y^3-zw),
\end{equation}
again with a removable polynomial $p_4(x,w)$.
\end{proof}

By \cite{E1}, one can compute Coxeter-Dynkin diagrams of the (global) singularities defined by $\HH=(\hh_1,\hh_2)$. Let $X^{(1)}:=\{(x,y,z,w) \in \CC^4 \, | \, \hh_1(x,y,z,w)=0\}$ and consider the function $\hh_2: X^{(1)} \to \CC$. It has besides the origin one or two additional critical points which are of type $A_1$. The singularity at the origin is indicated in Table~\ref{TabBiGab}. We define the {\em Milnor number} of the virtual singularity by the sum of the Milnor numbers of the singular points. It is equal to 12 in all cases. 

One can compute that there exists a (strongly) distinguished basis of thimbles $(e_1, \ldots , e_{\mu+1}) = (e_j^r \, | \, 1 \leq j \leq 8, 1 \leq r \leq M_j)$, where the intersection matrix of $(e_1, \ldots, e_8)=(e_1^1, \ldots , e_8^1)$ coincides with the  intersection matrix of the system $(\widehat{\delta}'_1, \ldots , \widehat{\delta}'_8)$ of \cite[Sect.~2.3]{E1}, the numbers $M_1, M_3, M_8$ are equal to one and the other numbers $M_2, M_4, M_5, M_6, M_7$ are indicated in Table~\ref{TabBiGab}, and the intersection matrix of $(e_j^r \, | \, 1 \leq j \leq 8, 1 \leq r \leq M_j)$ is computed according to \cite[Theorem~2.2.3]{E1}. By the proof of \cite[Proposition~3.6.1]{E1}, one can transform these bases to (strongly) distinguished bases of thimbles with  Coxeter-Dynkin diagrams of the form $\Pi_{\gamma_1,\gamma_2,\gamma_3,\gamma_4}$ of Fig.~\ref{FigPipqrs}, where $\gamma_1,\gamma_2;\gamma_3,\gamma_4$ are the Gabrielov numbers of the virtual singularity.

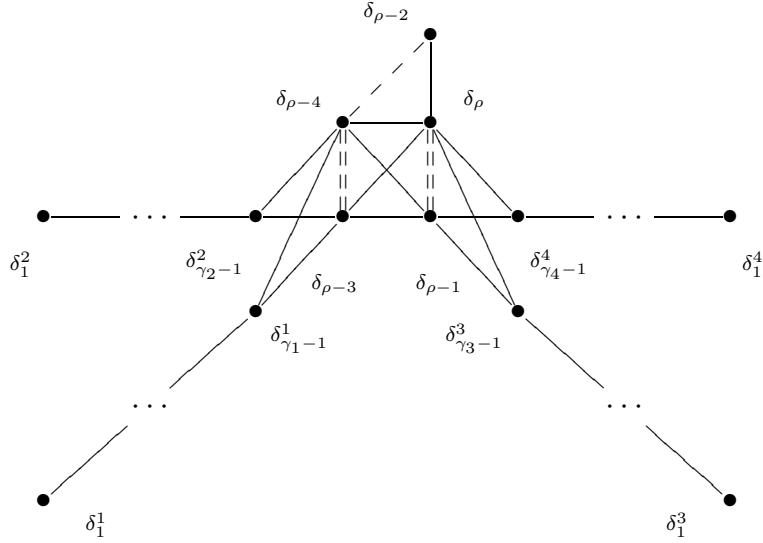
\begin{figure}
$$
\xymatrix{ 
 & & & & *{\bullet} \ar@{-}[d] \ar@{--}[dl] \ar@{}_{\delta_{\rho-2}}[l]  &  & & \\
 & & & *{\bullet} \ar@{-}[r] \ar@{==}[d] \ar@{-}[dr]  \ar@{-}[ldd] \ar@{}_{\delta_{\rho-4}}[l]
 & *{\bullet} \ar@{==}[d] \ar@{-}[dr]  \ar@{-}[rdd] \ar@{}^{\delta_{\rho}}[r]& & &  \\
 *{\bullet} \ar@{-}[r] \ar@{}_{\delta^2_1}[d]  & {\cdots} \ar@{-}[r]  & *{\bullet} \ar@{-}[r] \ar@{-}[ur]   \ar@{}_{\delta^2_{\gamma_2-1}}[d] & *{\bullet} \ar@{-}[dl] \ar@{-}[r] \ar@{-}[ur] \ar@{}^{\delta_{\rho-3}}[ld] & *{\bullet} \ar@{-}[r] \ar@{-}[dr] \ar@{}_{\delta_{\rho-1}}[rd] & *{\bullet} \ar@{-}[r]  \ar@{}^{\delta^4_{\gamma_4-1}}[d]  & {\cdots} \ar@{-}[r]  &*{\bullet} \ar@{}^{\delta^4_1}[d]   \\
& &  *{\bullet} \ar@{-}[dl] \ar@{}_{\delta^1_{\gamma_1-1}}[r]  & & & *{\bullet} \ar@{-}[dr] \ar@{}^{\delta^3_{\gamma_3-1}}[l] & &  \\
 & {\cdots} \ar@{-}[dl] & & & & & {\cdots} \ar@{-}[dr] & \\
*{\bullet}  \ar@{}_{\delta^1_1}[r] & & & & & & & *{\bullet}  
\ar@{}^{\delta^3_1}[l]
  } 
$$
\caption{The graph $\Pi_{\gamma_1,\gamma_2,\gamma_3,\gamma_4}$} \label{FigPipqrs}
\end{figure}

\begin{table}[h]
\begin{center}
\begin{tabular}{|c|c|ccccc|c|}
\hline
Virtual  & Germ at 0 &  $M_2$ & $M_4$ & $M_5$ & $M_6$ & $M_7$ & $\gamma_1, \gamma_2; \gamma_3, \gamma_4$  \\
\hline
$J'_{2,-1}$ & $J'_{11}$ & 3 & 1 & 1 & 1 & $3+1$ & $2,2;2,5+1$  \\
$K'_{1,-1}$ & $K'_{10}$ & 2 & $2+1$ & $2+1$ &1 & 1   & $2,2;3+1,3+1$ \\
$K^\flat_{1,-1}$ & $K'_{11}$ & $2+1$ & 2 & 3 & 1 & 1  & $2,2;3,4+1$ \\
$L_{1,-1}$ & $L_{10}$   & 2 & $2+1$  & 1 & 1 & $2+1$  & $2,3+1;2,3+1$    \\
$L^\sharp_{1,-1}$ & $L_{11}$  & $2+1$ & 3 & 1 & 1 & 2  & $2,3;2,4+1$  \\
$M_{1,-1}$  & $J'_{10}$ & 3 & $1+1$ & $1+1$ & 1 & 2  & $2+1,2+1;2,4$  \\
$M^\sharp_{1,-1}$  & $M_{11}$& 2 & $2+1$ & 2 & 1 & 2 & $2,3;3,3+1$  \\
$I_{1,-1}$ & $M_{11}$ & 2 & 2 & 2 & $1+1$ & 2  & $2+1,3;3,3$    \\
\hline
\end{tabular}
\end{center}
\caption{Coxeter-Dynkin diagrams of virtual singularities} \label{TabBiGab}
\end{table}

%%%%%%%%%%%%%%%%%%%%%%%%%%%%%%%%%%%%
\section{Strange duality}
We now consider the duality defined in Section~\ref{sect:Z2}.
We summarise the results on  the Dolgachev and Gabrielov numbers of the virtual singularities in Table~\ref{TabQmon}. From this table, we get the following result:

\begin{theorem} The Gabrielov numbers of a virtual quadrangle complete intersection singularity coincide with the Dolgachev numbers of the dual one, and vice versa.
\end{theorem}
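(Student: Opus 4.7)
The proof strategy is essentially a case-by-case verification, once the preceding tables are in hand. The plan is to read the Dolgachev numbers off Table~\ref{TabDol}, the Gabrielov numbers off Table~\ref{TabBiGab}, and the duality pairing off Table~\ref{TabDual}, and then compare each virtual singularity with its dual.

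First, I would assemble a single comparison table (this is what Table~\ref{TabQmon} is for): one column listing the eight virtual quadrangle complete intersection singularities, one column listing their Dolgachev numbers $\alpha_1,\alpha_2;\alpha_3,\alpha_4$, one column listing their Gabrielov numbers $\gamma_1,\gamma_2;\gamma_3,\gamma_4$, and one column identifying the Berglund--H\"ubsch dual according to the procedure of Section~\ref{sect:Z2}. The duality computed in Table~\ref{TabDual} produces five self-dual singularities ($J'_{2,-1}$, $K'_{1,-1}$, $L^\sharp_{1,-1}$, $M_{1,-1}$, $M^\sharp_{1,-1}$, $I_{1,-1}$) and one genuine swap, namely $K^\flat_{1,-1} \leftrightarrow L_{1,-1}$.

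Next, I would verify the identity pair by pair. For the self-dual cases, it suffices to check that $(\alpha_1,\alpha_2;\alpha_3,\alpha_4)=(\gamma_1,\gamma_2;\gamma_3,\gamma_4)$ as unordered pairs of pairs. For instance, $J'_{2,-1}$ has Dolgachev numbers $2,2;2,6$ from Table~\ref{TabDol} and Gabrielov numbers $2,2;2,5{+}1=2,2;2,6$ from Table~\ref{TabBiGab}; similarly $I_{1,-1}$ gives $3,3;3,3$ on both sides; and so on through $K'_{1,-1}, L^\sharp_{1,-1}, M_{1,-1}, M^\sharp_{1,-1}$. For the swapped pair one checks that the Dolgachev numbers $2,4;2,4$ of $K^\flat_{1,-1}$ agree with the Gabrielov numbers $2,3{+}1;2,3{+}1$ of $L_{1,-1}$, and, symmetrically, that the Gabrielov numbers $2,2;3,4{+}1=2,2;3,5$ of $K^\flat_{1,-1}$ agree with the Dolgachev numbers $2,2;3,5$ of $L_{1,-1}$.

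The genuinely nontrivial content has already been carried out in the preceding two sections: determining the principal orbits in the three cases (A), (B), (C) to extract the Dolgachev numbers, and identifying the relevant cusp of type $T^2_{\gamma_1,\gamma_3,\gamma_2,\gamma_4}$ via Wall's reduction and the proposition on Gabrielov numbers. The main bookkeeping obstacle in assembling Table~\ref{TabQmon} is the convention on how to split the quadruple into two pairs, since the theorem compares $(\alpha_1,\alpha_2)$ with $(\gamma_1,\gamma_2)$ and $(\alpha_3,\alpha_4)$ with $(\gamma_3,\gamma_4)$; one needs to make sure the decomposition of $\hh_2$ into $\hh_{2,1}$ and $\hh_{2,2}$ used to define the Dolgachev numbers corresponds, under the duality, to the splitting of the cusp type $T^2_{\gamma_1,\gamma_3,\gamma_2,\gamma_4}$ into the pairs $(\gamma_1,\gamma_2)$ and $(\gamma_3,\gamma_4)$ used to define the Gabrielov numbers on the dual side. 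Once this matching is fixed consistently (mirroring the convention in \cite{ET3}), the theorem is an immediate consequence of the tabulated values.
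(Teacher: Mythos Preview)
Your approach is exactly the paper's: the theorem is stated immediately after Table~\ref{TabQmon} is assembled from Tables~\ref{TabDol}, \ref{TabBiGab}, and \ref{TabDual}, and the proof is simply the inspection of that table. One tiny slip: you write ``five self-dual singularities'' but then (correctly) list six --- only the pair $K^\flat_{1,-1}\leftrightarrow L_{1,-1}$ is a genuine swap.
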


For another feature of this duality, we have to introduce some notions.

Let $f_1$, \dots , $f_k$ be quasihomogeneous functions on $\CC^n$  of
degrees $d_1$, \dots , $d_k$ with respect to weights $w_1$, \dots, $w_n$. 
We suppose that the equations $f_1=f_2= \ldots = f_k=0$ define a complete intersection $X$ in
$\CC^n$. There is a natural $\CC^\ast$-action on the space $\CC^n$ defined by 
$\lambda\ast(x_1, \ldots, x_n)=
(\lambda^{w_1}x_1, \ldots, \lambda^{w_n}x_n)$, $\lambda \in \CC^\ast$.

Let
$A=\CC[x_1, \ldots , x_n]/(f_1, \ldots , f_k)$ be the coordinate ring of $X$. Then the $\CC^\ast$-action on $\CC^n$ induces a
natural grading $A=\oplus_{s=0}^\infty A_s$ on the ring $A$, where 
$$A_s := \{  g\in A \, | \, g(\lambda\ast (x_1, \ldots, x_n))=\lambda^s g(x_1, \ldots, x_n) \mbox{ for } \lambda \in \CC^\ast\}.
$$ 

We shall consider the Poincar\'e series $P_X(t)=\sum_{s=0}^\infty \dim A_s \cdot t^s$
of this graded algebra. One has
\begin{equation*}
P_X(t) = \frac{ \prod_{j=1}^k (1-t^{d_j}) }{\prod_{i=1}^n (1-t^{w_i})}.
\end{equation*}

For a map $\varphi: Z \to Z$ of a topological space $Z$,
the {\em zeta function}
is defined to be
$$\zeta_{\varphi}(t)=\prod_{p\ge0}
\left\{\det \left( \mbox{id} -t\cdot {\varphi}_\ast\vert_{H_p(Z;\CC)}\right)\right\}^{(-1)^p}.$$
If, in the definition, we use the actions of the operators ${\varphi}_\ast$ on the reduced homology groups
$\overline{H}_p(Z;\ZZ)$, we get the {\em reduced} zeta function
$$
\overline\zeta_{\varphi}(t)  =  \frac{\zeta_{\varphi}(t)}{(1-t)}. 
$$

For $0 \leq j \leq k$, let $X^{(j)}$ be the complete intersection given by the equations $f_1=
\ldots = f_j=0$ ($X^{(0)}=\CC^n$, $X^{(k)}=X$). The restriction of the function $f_j$ ($j=1,
\ldots , k$) to the variety $X^{(j-1)}$ defines a locally trivial fibration $X^{(j-1)} \setminus
X^{(j)} \to \CC^\ast$. Let $V^{(j)} = f_j^{-1}(1) \cap X^{(j-1)}$ be the typical fibre (Milnor fibre) of this
fibration. Note that it is not necessarily smooth. There is a monodromy transformation $\varphi^{(j)} : V^{(j)} \to V^{(j)}$  on it.
Let
$$\overline\zeta_{X,j}(t) := \overline\zeta_{{\varphi}^{(j)}}(t).$$
One can show that $({\varphi}^{(j)}_\ast)^{d_j} = \mbox{id}$ and therefore 
$\overline\zeta_{X,j}(t)$ can be written in the form
$$
\prod_{\ell\vert d_j}(1-t^\ell)^{\alpha_\ell}, \ \alpha_\ell\in\ZZ.
$$
Following K.~Saito \cite{S1, S2}, we define the Saito dual to $\overline\zeta_{X,j}(t)$ to be the
rational function 
$$
\overline\zeta_{X,j}^\ast(t)=\prod_{m\vert d_j}(1-t^m)^{-\alpha_{(d_j/m)}}
$$
(note that different degrees $d_j$ are used for different $j$).

Let $Y^{(k)}=(X^{(k)} \setminus\{0\})/\CC^\ast$ be the space of orbits of the
$\CC^\ast$-action on $X^{(k)}\setminus\{0\}$ and
$Y^{(k)}_m$ be the set of orbits for which the isotropy group is the
cyclic group of order $m$. For a topological space $Z$, denote by $\chi(Z)$ its Euler characteristic. Define
$$\mbox{Or}_X(t) := \prod_{m \geq 1} (1-t^m)^{\chi(Y_m^{(k)})}.$$

Now let $X$ be an ICIS in $\CC^4$ defined by two polynomial equations $f_1=f_2=0$ and assume that both $X^{(1)}=f_1^{-1}(0)$ and $X^{(2)}=f_2^{-1}(0)$ have isolated singularities at the origin. Moreover, assume that $X^{(1)}$ has a singularity of type $A_1$. Consider the mapping $F:=(f_1,f_2) : \CC^4 \to \CC^2$. Let $C_F$ be the critical locus of $F$ and $D_F=F(C_F)$. The mapping $F|_{\CC^4-F^{-1}(D_F)} : \CC^4-F^{-1}(D_F) \to \CC^2 -D_F$ defines a locally trivial fibration. Assume that $(1,1) \not\in D_F$. Let $V^{(1)} = f_1^{-1}(1)$ and $V_2= f_2^{-1}(1) \cap V^{(1)}$. (Note that $V_2 \neq V^{(2)}$ but $V_2$ and $V^{(2)}$ are homeomorphic to each other.) Then $V_2 \subset V^{(1)}$ and the monodromy transformation $\varphi^{(1)} : V^{(1)} \to V^{(1)}$ induces a relative monodromy operator $\widehat{\varphi}_\ast: H_3(V^{(1)},V_2; \ZZ) \to H_3(V^{(1)},V_2; \ZZ)$. Let 
$$\Delta_X(t) := \det \left(\mbox{id}-t\cdot \widehat{\varphi}_\ast\vert_{H_3(V^{(1)},V_2;\CC)}\right)$$
be the characteristic polynomial of this operator. 

\begin{proposition} \label{prop:deltazeta}
We have
\[ \Delta_X(t) = (1-t)^2 \prod_{j=1}^2 \overline{\zeta}_{X,j}(t).
\]
\end{proposition}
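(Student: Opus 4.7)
The plan is to exploit the long exact homology sequence of the pair $(V^{(1)}, V_2)$ together with the multiplicativity of the characteristic polynomial in equivariant short exact sequences. First, I would realise $\varphi^{(1)}$ as a self-map of $V^{(1)}$ that preserves $V_2$ setwise; this is possible by lifting the loop $(e^{2\pi i\theta},1)\subset \CC^2\setminus D_F$ through the locally trivial fibration $F:\CC^4\setminus F^{-1}(D_F)\to \CC^2\setminus D_F$, which yields compatible monodromy operators on every term of the long exact sequence of the pair.

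Next I would pin down where the relative homology is concentrated. Since $X^{(1)}$ has an isolated $A_1$ singularity in four complex variables, $V^{(1)}\simeq S^3$, so $H_p(V^{(1)};\CC)=0$ for $p\neq 0,3$. By Hamm's theorem the ICIS Milnor fibre $V^{(2)}\cong V_2$ is simply connected with $H_p(V_2;\CC)=0$ for $p\neq 0,2$. Feeding these into the long exact sequence of $(V^{(1)},V_2)$ forces $H_p(V^{(1)},V_2)=0$ for $p\neq 3$ and yields an equivariant short exact sequence
\[
0\longrightarrow H_3(V^{(1)})\longrightarrow H_3(V^{(1)},V_2)\longrightarrow H_2(V_2)\longrightarrow 0,
\]
so that $\Delta_X(t)=\det(\mathrm{id}-t\,\varphi^{(1)}_*|H_3(V^{(1)}))\cdot\det(\mathrm{id}-t\,\varphi^{(1)}_*|H_2(V_2))$. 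For the first factor, Picard-Lefschetz for $A_1$ in four complex variables shows that $\varphi^{(1)}_*=\mathrm{id}$ on $H_3(V^{(1)})\cong\ZZ$: the vanishing cycle has self-intersection zero because the intersection form on the middle homology of a six-real-dimensional manifold is skew-symmetric. Hence this factor equals $1-t$, and since the only nonzero reduced homology of $V^{(1)}$ is $\overline{H}_3\cong\CC$ carrying trivial monodromy, $\overline\zeta_{X,1}(t)=1/(1-t)$; thus $1-t=(1-t)^2\overline\zeta_{X,1}(t)$.

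It then remains to identify the second factor $\det(\mathrm{id}-t\,\varphi^{(1)}_*|H_2(V_2))$ with $\overline\zeta_{X,2}(t)=\det(\mathrm{id}-t\,\varphi^{(2)}_*|H_2(V^{(2)}))$. This is the main obstacle: the restricted monodromy $\varphi^{(1)}|_{V_2}$ corresponds to the loop $(e^{2\pi i\theta},1)$ in $\CC^2\setminus D_F$ based at $(1,1)$, whereas $\varphi^{(2)}$ corresponds, after transport along a path from $(0,1)$ to $(1,1)$, to a different element of $\pi_1(\CC^2\setminus D_F,(1,1))$. I would trivialise $F$ over such a path to fix a homeomorphism $V^{(2)}\cong V_2$ and then argue that under this identification the two monodromies have the same characteristic polynomial on $H_2$ — this requires analysing the representation of $\pi_1(\CC^2\setminus D_F,(1,1))$ on $H_2(V_2;\CC)$ and can, in our setting, be established either by a general argument using the finite order of both monodromies and the structure of the discriminant, or by direct verification in the cases to which the proposition is applied. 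Granted this identification, combining the two factors yields $\Delta_X(t)=(1-t)^2\,\overline\zeta_{X,1}(t)\,\overline\zeta_{X,2}(t)$.
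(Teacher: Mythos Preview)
Your approach is essentially the paper's: both arguments use the long exact sequence of the pair $(V^{(1)},V_2)$, extract the equivariant short exact sequence
\[
0\to H_3(V^{(1)})\to H_3(V^{(1)},V_2)\to H_2(V_2)\to 0,
\]
apply multiplicativity of $\det(\mathrm{id}-t\cdot -)$, and use $\overline\zeta_{X,1}(t)=(1-t)^{-1}$ to get the factor $(1-t)^2$.

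The one substantive difference is the point you single out as ``the main obstacle.'' The paper does not argue it at all: in its commutative diagram it simply \emph{labels} the right vertical arrow $\varphi^{(2)}_\ast$ (on $H_2(V_2;\ZZ)$, not $H_2(V^{(2)};\ZZ)$) and proceeds, so the identification of the induced action of $\varphi^{(1)}|_{V_2}$ with the monodromy $\varphi^{(2)}$ of $f_2|_{X^{(1)}}$ is taken for granted rather than proved. Your careful distinction between the loops $(e^{2\pi i\theta},1)$ and (the transport of) $(0,e^{2\pi i\theta})$ in $\pi_1(\CC^2\setminus D_F)$ is well placed; the paper's terse proof simply suppresses this step. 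Your justification that $\varphi^{(1)}_\ast$ acts trivially on $H_3(V^{(1)})$ via Picard--Lefschetz for an odd--dimensional $A_1$ is also more explicit than the paper, which just writes $\overline\zeta_{X,1}(t)=(1-t)^{-1}$. In short: same skeleton, you have filled in details the paper omits and honestly flagged the one that is genuinely nontrivial.
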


\begin{proof}
We have the following commutative diagram of split short exact sequences:
\[ 
\xymatrix{
0  \ar[r] & H_3(V^{(1)};\ZZ) \ar[r]\ar[d]_{\varphi_\ast^{(1)}}  & H_3(V^{(1)},V_2;\ZZ) \ar[r]\ar[d]_{\widehat{\varphi}_\ast} &  H_2(V_2;\ZZ) \ar[r]\ar[d]_{\varphi_\ast^{(2)}} & 0\\
0\ar[r] & H_3(V^{(1)};\ZZ) \ar[r] & H_3(V^{(1)},V_2;\ZZ) \ar[r] & H_2(V_2;\ZZ) \ar[r] & 0
}
\]
This shows that 
\begin{eqnarray*}
\Delta_X(t) & = &  \det \left(\mbox{id}-t\cdot \varphi_\ast^{(1)}\vert_{H_3(V^{(1)};\CC)}\right) \det \left(\mbox{id}-t\cdot \varphi_\ast^{(2)}\vert_{H_2(V_2;\CC)}\right) \\
& = & \overline{\zeta}_{X,1}(t)^{-1} \overline{\zeta}_{X,2}(t) = (1-t)^2 \prod_{j=1}^2 \overline{\zeta}_{X,j}(t) 
\end{eqnarray*}
since $\overline{\zeta}_{X,1}(t)=(1-t)^{-1}$. 
\end{proof}

Let $X$ be an ICIS with a Coxeter-Dynkin diagram of type $\Pi_{\gamma_1,\gamma_2,\gamma_3,\gamma_4}$. 
Then the polynomial $\Delta_X(t)$ is equal to the characteristic polynomial $\Delta(\Pi_{\gamma_1,\gamma_2,\gamma_3,\gamma_4})(t)$ of the Coxeter element corresponding to this Coxeter-Dynkin diagram. By \cite[Proposition 3.6.2]{E1}, we have
$$\Delta(\Pi_{\gamma_1,\gamma_2,\gamma_3,\gamma_4})(t) = (1-t)^2 \Delta(S_{\gamma_1,\gamma_2,\gamma_3,\gamma_4})(t)$$ 
where $\Delta(S_{\gamma_1,\gamma_2,\gamma_3,\gamma_4})(t)$ is the characteristic polynomial of the Coxeter element corresponding to the graph $S_{\gamma_1,\gamma_2,\gamma_3,\gamma_4}$ depicted in Fig.~\ref{FigSpqrs}. (Note that there is a slight mistake in the proof of \cite[Proposition 3.6.2]{E1} which was corrected in \cite{E3a}.)  By Proposition~\ref{prop:deltazeta} we get
\begin{equation}
\prod_{j=1}^2 \overline{\zeta}_{X,j}(t) = \Delta(S_{\gamma_1,\gamma_2,\gamma_3,\gamma_4})(t). \label{eq:piS}
\end{equation}
This also gives an interpretation of the characteristic polynomial of the Coxeter element $c^\flat$ considered in \cite{E3a}.
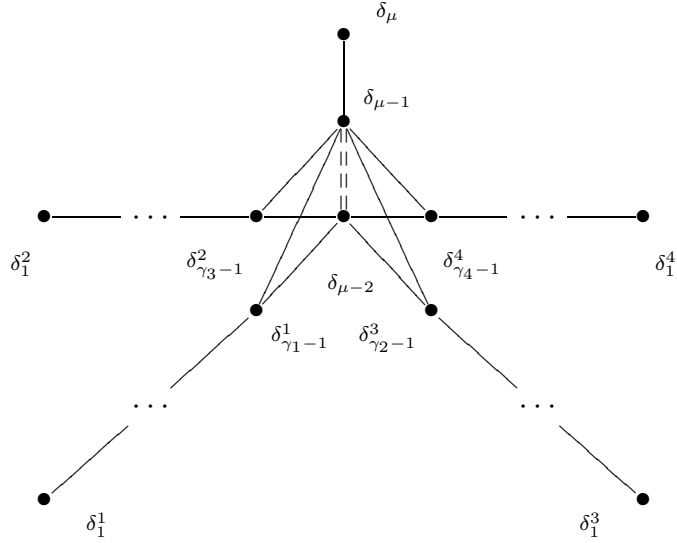
\begin{figure}
$$
\xymatrix{ 
 & & & *{\bullet} \ar@{-}[d] \ar@{}^{\delta_{\mu}}[r] & & & \\
 & & & *{\bullet} \ar@{==}[d] \ar@{-}[dr]  \ar@{-}[ldd] \ar@{-}[rdd] \ar@{}^{\delta_{\mu-1}}[r]
 & & &  \\
 *{\bullet} \ar@{-}[r] \ar@{}_{\delta^2_1}[d]  & {\cdots} \ar@{-}[r]  & *{\bullet} \ar@{-}[r] \ar@{-}[ur]   \ar@{}_{\delta^2_{\gamma_3-1}}[d] & *{\bullet} \ar@{-}[dl] \ar@{-}[dr] \ar@{-}[r] \ar@{}_{\delta_{\mu-2}}[dr] & *{\bullet} \ar@{-}[r]  \ar@{}^{\delta^4_{\gamma_4-1}}[d]  & {\cdots} \ar@{-}[r]  &*{\bullet} \ar@{}^{\delta^4_1}[d]   \\
 & &   *{\bullet} \ar@{-}[dl] \ar@{}_{\delta^1_{\gamma_1-1}}[r]  & & *{\bullet} \ar@{-}[dr] \ar@{}^{\delta^3_{\gamma_2-1}}[l]  & & \\
 & {\cdots} \ar@{-}[dl] & & & & {\cdots} \ar@{-}[dr] & \\
*{\bullet}  \ar@{}_{\delta^1_1}[r] & & & & & & *{\bullet}  
\ar@{}^{\delta^3_1}[l]
  } 
$$
\caption{The graph $S_{\gamma_1,\gamma_2,\gamma_3,\gamma_4}$} \label{FigSpqrs}
\end{figure}

A $k=0$ element of one of the series can again be given as the zero set of two quasihomogeneous functions of weights $w_1,w_2,w_3,w_4$ and degrees $d_1,d_2$. We are now ready to state the following analogue of \cite[Theorem~6]{ET3}:
\begin{theorem} 
Let $X$ be a virtual ICIS and $\widetilde{X}_0$ be the $k=0$ element of the dual series.  Then we have
\begin{equation}
\prod_{j=1}^2 \overline{\zeta}_{X,j}(t) = P_{\widetilde{X}_0}(t) \cdot {\rm Or}_{\widetilde{X}_0}(t) = \prod_{j=1}^2 \overline{\zeta}_{\widetilde{X}_0,j}^\ast(t).
\label{eq:SaiICIS}
\end{equation}
\end{theorem}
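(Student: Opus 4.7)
My plan is to establish the chain of equalities by first rewriting the leftmost expression using the Coxeter-Dynkin diagram computation, then applying the strange duality theorem, and finally verifying the two remaining equalities by matching explicit rational functions. First, equation (\ref{eq:piS}) applied to the virtual singularity $X$ gives
$$\prod_{j=1}^2 \overline{\zeta}_{X,j}(t) = \Delta(S_{\gamma_1,\gamma_2,\gamma_3,\gamma_4})(t),$$
where $\gamma_1,\gamma_2;\gamma_3,\gamma_4$ are the Gabrielov numbers of $X$ computed in Table~\ref{TabBiGab}. By the preceding strange-duality theorem, these Gabrielov numbers coincide with the Dolgachev numbers $\alpha_1,\alpha_2;\alpha_3,\alpha_4$ of $\widetilde{X}_0$ (see Table~\ref{TabQmon}). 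The task is therefore reduced to verifying
$$\Delta(S_{\alpha_1,\alpha_2,\alpha_3,\alpha_4})(t) \;=\; P_{\widetilde{X}_0}(t)\cdot {\rm Or}_{\widetilde{X}_0}(t) \;=\; \prod_{j=1}^2 \overline{\zeta}_{\widetilde{X}_0,j}^\ast(t).$$

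For the middle term, I would compute $P_{\widetilde{X}_0}(t)$ directly from the weights and degrees $(w_1,w_2,w_3,w_4;d_1,d_2)$ in Table~\ref{TableICISk=0}, so that $P_{\widetilde{X}_0}(t)=\prod_{j=1}^2(1-t^{d_j})/\prod_{i=1}^4(1-t^{w_i})$. The factor ${\rm Or}_{\widetilde{X}_0}(t)$ is determined by the orbifold structure of $Y^{(2)}=(\widetilde{X}_0\setminus\{0\})/\CC^\ast$; as in the discussion of Dolgachev numbers for $\widetilde{X}_0$, this quotient is an orbifold $\PP^1$ carrying four exceptional points with isotropy orders equal to the Dolgachev numbers $\alpha_1,\alpha_2,\alpha_3,\alpha_4$. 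Computing $\chi(Y_m^{(2)})$ from this description yields a closed-form product in terms of cyclotomic polynomials. The first equality is then a direct comparison: use the explicit factorization of the characteristic polynomial $\Delta(S_{\alpha_1,\alpha_2,\alpha_3,\alpha_4})(t)$ derived from the graph of Fig.~\ref{FigSpqrs} (as in \cite[Sect.~3.6]{E1}) and match it against $P_{\widetilde{X}_0}(t)\cdot {\rm Or}_{\widetilde{X}_0}(t)$ for each of the eight quadrangle ICIS.

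The second equality I would deduce from K.~Saito's duality formalism in exact analogy with \cite[Sect.~6]{ET3}. For each weighted homogeneous complete intersection $\widetilde{X}_0$, the factor $\overline{\zeta}_{\widetilde{X}_0,j}(t)$ has a factorization of the form $\prod_{\ell | d_j}(1-t^\ell)^{\alpha_\ell}$, and the relation between the Poincar\'e series of a graded complete intersection algebra and the Saito duals of its monodromy zeta functions (as used in \cite{S1,S2} and in the proof of the hypersurface analogue in \cite{ET3}) gives $\prod_{j=1}^2 \overline{\zeta}_{\widetilde{X}_0,j}^\ast(t) = P_{\widetilde{X}_0}(t)\cdot{\rm Or}_{\widetilde{X}_0}(t)$ once the Euler characteristics of the strata $Y^{(2)}_m$ have been identified.

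The principal obstacle is the first equality $\Delta(S_{\alpha_1,\alpha_2,\alpha_3,\alpha_4})(t)=P_{\widetilde{X}_0}(t)\cdot{\rm Or}_{\widetilde{X}_0}(t)$. Although each side has an explicit rational form, the weights, degrees, and orbifold stratifications vary across the eight series, so there is no single uniform identity to invoke; the verification must be carried out case by case, carefully tracking which Dolgachev numbers correspond to which exceptional orbit and accounting for the fact that one of the four orbits may lie on the singular locus or a linear component of $\widetilde{X}_0$. This bookkeeping, paralleling the one carried out for virtual hypersurface singularities in \cite{ET3}, constitutes the bulk of the work.
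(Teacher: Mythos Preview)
Your approach is essentially the same as the paper's: both reduce the left-hand side via equation~(\ref{eq:piS}) to $\Delta(S_{\gamma_1,\gamma_2,\gamma_3,\gamma_4})(t)$, compute $P_{\widetilde{X}_0}(t)$ from the weights and degrees, identify ${\rm Or}_{\widetilde{X}_0}(t)$ from the orbifold structure, and verify the first equality case by case. Two minor points where the paper is more concrete: it invokes the explicit closed formula \cite[p.~98]{E1} for $\Delta(S_{\gamma_1,\gamma_2,\gamma_3,\gamma_4})(t)$ and writes ${\rm Or}_{\widetilde{X}_0}(t)=(1-t)^{-2}\prod_{i=1}^4(1-t^{\gamma_i})$ directly, and for the second equality it cites the complete-intersection result of \cite{EG} rather than Saito's papers or \cite{ET3}.
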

\begin{proof}
By Equation~(\ref{eq:piS}), the left-hand side of Equation~(\ref{eq:SaiICIS}) is equal to $\Delta(S_{\gamma_1,\gamma_2,\gamma_3,\gamma_4})(t)$.
By \cite[p.~98]{E1}, there is the following formula for this polynomial :
\begin{equation}
\Delta(S_{\gamma_1,\gamma_2,\gamma_3,\gamma_4})(t) = (t^3-2t^2-2t+1) \prod_{i=1}^4 \frac{t^{\gamma_i}-1}{t-1} + t^2 \sum_{i=1}^4 \frac{t^{\gamma_i-1}-1}{t-1} \prod_{j=1, \atop j \neq i}^4  \frac{t^{\gamma_j}-1}{t-1}.
\label{eq:charPi}
\end{equation}
Using this formula, we can compute the polynomial $\prod_{j=1}^2 \overline{\zeta}_{X,j}(t)$ in each case. The result is given in Table~\ref{TabQmon}. Under a certain non-degeneracy condition, the function $\zeta_{X,2}(t)$ can also be computed by the formula of \cite[Theorem~4]{Gu} from the Newton polytope.

On the other hand, we can compute the Poincar\'e series from the weights and degrees of the dual ICIS given in Table~\ref{TabDual}. The polynomial ${\rm Or}_{\widetilde{X}}(t)$ is given by
$${\rm Or}_{\widetilde{X}}(t) = (1-t)^{-2} \prod_{i=1}^4 (1-t^{\gamma_i})$$
where $\gamma_1,\gamma_2; \gamma_3, \gamma_4$ are the Gabrielov numbers of $X$ which are the Dolgachev numbers of $\widetilde{X}$. Comparing these polynomials, we obtain the first equality of Equation~(\ref{eq:SaiICIS}).
\begin{table}[h]
\begin{center}
\begin{tabular}{|c|c|c|c|c|c|}
\hline
$X$ & ${\rm Dol}(X)$ &  ${\rm Gab}(X)$  & $\prod_{j=1}^2 \overline{\zeta}_{X,j}(t)$ & Weight system $\widetilde{X}_0$ & Dual  \\
\hline
$J'_{2,-1}$ & $2,2;2,6$ & $2,2;2,6$ & $2^2 \cdot 8 \cdot 10/1^2 \cdot 4 \cdot 5$  & $2,6,5,4;8,10$ & $J'_{2,-1}$  \\
$K'_{1,-1}$ & $2,2;4,4$  & $2,2;4,4$ &  $2 \cdot 6 \cdot 8/1^2 \cdot 3$ & $2,4,4,3;6,8$ & $K'_{1,-1}$ \\
$K^\flat_{1,-1}$ & $2,4;2,4$ & $2,2;3,5$  & $2 \cdot 7 \cdot 8/1^2 \cdot 4$ & $2,5,4,3;7,8$ & $L_{1,-1}$ \\
$L_{1,-1}$ & $2,2;3,5$  & $2,4;2,4$ & $2 \cdot 6 \cdot 8/1^2 \cdot 3$ & $2,4,4,3;6,8$ & $K^\flat_{1,-1}$  \\
$L^\sharp_{1,-1}$ & $2,3;2,5$  & $2,3;2,5$ & $2 \cdot 7 \cdot 8/1^2 \cdot 4$  & $2,5,4,3;7,8$ & $L^\sharp_{1,-1}$ \\
$M_{1,-1}$ & $3,3;2,4$  & $3,3;2,4$ & $6 \cdot 7/1^2$  & $3,4,2,3;6,7$ & $M_{1,-1}$ \\
$M^\sharp_{1,-1}$ & $2,3;3,4$   & $2,3;3,4$ & $6 \cdot 7/1^2$  & $2,4,3,3;6,7$ & $M^\sharp_{1,-1}$ \\
$I_{1,-1}$ & $3,3;3,3$ & $3,3;3,3$  & $3 \cdot 6^2/1^2 \cdot 2$ &  $3,3,3,2;6,6$ & $I_{1,-1}$ \\
\hline
\end{tabular}
\end{center}
\caption{Dolgachev numbers, Gabrielov numbers and monodromy zeta functions} \label{TabQmon}
\end{table}

The second equality follows from \cite{EG}.
\end{proof}

In each case, the polynomial $\prod_{j=1}^2 \overline{\zeta}_{\widetilde{X}_0,j}(t)$ has already been indicated in \cite[Table~7]{E3a} under the heading $\pi^\ast$. 

\begin{remark}  The spectrum of an ICIS was defined in \cite{ESt}. In a similar way one can define the spectrum of a virtual ICIS $X$. Spectra for the series of ICIS above have been calculated by Steenbrink \cite{St2}. The spectrum of a virtual ICIS agrees with the spectrum defined by setting $k=-1$ in the corresponding formulas of \cite{St2}. 
The spectral numbers coincide with the exponents of the roots of $\prod_{j=1}^2 \overline{\zeta}_{X,j}(t)$.
\end{remark} 
\begin{sloppypar}

{\bf Acknowledgements}.\  
This work has been partially supported by DFG.
The second named author is also supported 
by JSPS KAKENHI Grant Number JP16H06337. 
\end{sloppypar}

%%%%%%%%%%%%%%%%%%%%%%%%%%%%%%%%%%%%%%%%%%%%%%%%%%%%%%%%%%%%%%%

\end{document}